\newtheorem{theorem}{\sc Theorem}[section]
\newtheorem{corollary}[theorem]{\sc Corollary} 
\newtheorem{lemma}[theorem]{\sc Lemma}
\newtheorem{proposition}[theorem]{\sc Proposition}
\newtheorem{definition}[theorem]{\sc Definition}
\title{Extending Lagrangian Transformations to Nonconvex Scalar Conservation Laws}
\author{Prerona Dutta}
\address{Department of Mathematics, The Ohio State University}
\email{dutta.105@osu.edu}
\begin{document}
\maketitle
\vspace*{-0.5cm}
\begin{abstract}
The present paper studies a method of finding Lagrangian transformations, in the form of particle paths, for all scalar conservation laws having a smooth flux. These are found using the notion of weak diffeomorphisms. More precisely, from any given scalar conservation law, we derive a Temple system having one linearly degenerate and one genuinely nonlinear family. We modify the system to make it strictly hyperbolic and prove an existence result for it. Finally we establish that entropy admissible weak solutions to this system are equivalent to those of the scalar equation. This method also determines the associated weak diffeomorphism.\\

\noindent {\sc Keywords.} scalar conservation law, Temple system, weak diffeomorphism.\\

\noindent {\sc AMS subject classification.} Primary: 35L65 ; Secondary: 35L40, 35L45.
\end{abstract}

\section{Introduction}
Studies have often been conducted on families of conservation laws with the aim of developing a method for finding Lagrangian transformations. This involves describing the problem equivalently by constructing particle paths. Classically, this means that the system can be described as the flow in time of a diffeomorphism of space. In our framework, once the solutions are no longer smooth, the erstwhile diffeomorphisms become invertible bi-Lipschitz mappings, referred to as weak diffeomorphisms, that also satisfy systems of conservation laws.  Through such weak diffeomorphisms, the notion of a particle path can be extended to scalar equations and to systems which do not include velocity fields explicitly. A more exciting problem is to find particle paths for many systems where this had not been observed previously. In particular for conservation laws, we need to find methods that work even when the particle paths are not differentiable and when there is no natural velocity field.\\

Previous work in \cite{HKT} illustrated such a procedure for scalar conservation laws in one space variable, that works when the flux is convex.  Holmes, Keyfitz and Tiglay established in \cite{HKT} that under the standard notions of admissibility of weak solutions, weak solutions of the original equation correspond to weak solutions of the transformed system. For classical solutions, the method in \cite{HKT} is related to Ebin and Marsden's work in \cite{EM} on systems of incompressible fluid flow, where they showed that motions were geodesics on the space of volume preserving diffeomorphisms under a suitable metric. For weak solutions, the authors of \cite{HKT} followed a similar approach considering particle paths as curves through the space of absolutely continuous and invertible isomorphisms, which they called weak diffeomorphisms. Weak diffeomorphisms for compressible gas dynamics equations were also found in \cite{HKT}. As these systems describe fluid flow, existence of particle paths in the form of weak diffeomorphisms followed naturally in this case. \\

A different notion of Lagrangian variables was used by Bianchini and Marconi in \cite{BM1}. They used generalized characteristics to define their notion of Lagrangian representation and applied it to prove results on the structure of bounded entropy admissible weak solutions to scalar conservation laws. The Lagrangian representation in this case was not related to the idea of a weak diffeomorphism. In fact, the construction in \cite{BM1} was based on wavefront tracking. Thereafter, to find an approximation scheme and compactness estimates, the authors of \cite{BM1} utilized the transport collapse method first introduced by Brenier in \cite{Bren}. \\

Our definition of a Lagrangian transformation is different from the perspective in \cite{BM1}. Our goal is to extend the idea of particle paths for scalar conservation laws to weak solutions. The difficulty in this problem lies in the fact that such particle paths cannot be characteristic curves as characteristics may intersect. With this in mind, given a scalar conservation law 
\begin{equation} \label{eq1}
\rho_t+f(\rho)_x=0\,,\quad \rho(x,0)=\rho_0(x)\,,\quad x\in \mathbb R\,
\end{equation}
since an abstract scalar conservation law may not describe the flow of a substance, we define $u := f(\rho)/\rho$ and rewrite (\ref{eq1}) with $u$ defining the flow speed as
\begin{equation} 
\rho_t+(u\rho)_x=0\,,\quad \rho(x,0)=\rho_0(x)\,,\quad x\in \mathbb R\,~.
\end{equation}
Given a particle at $x$ at $t = 0$, define $\gamma(x, t)$ as the position it would reach at time $t$ if it travels with velocity $u$ at every time. Thus $\gamma$ is defined by the equation
\begin{equation} \label{gammadef}
\dot\gamma = u\circ\gamma \quad \textrm{or} \quad \gamma_t(x,t)=u(\gamma(x,t),t)\, \quad \textrm{with}\quad \gamma(x,0)=x\,~.
\end{equation}
Our objective is to establish that for each fixed $t$, $\gamma$ is a diffeomorphism for smooth solutions and generalizes to a weak diffeomorphism in the absence of classical solutions. The authors of \cite{HKT} addressed this for the case when the smooth flux $f$ is convex. In the present paper, we determine how to find weak diffeomorphisms for any scalar conservation law with a smooth flux, not necessarily convex. After some manipulation of the conservation law, we define a mapping $\gamma$ that satisfies a Hamilton-Jacobi equation. From this, as done in \cite{HKT}, we derive a Temple system with one linearly degenerate and one genuinely nonlinear family. The system obtained in \cite{HKT} was strictly hyperbolic owing to the convexity assumption on the flux. Without the convexity of $f$, the resulting Temple system would not be strictly hyperbolic. There are no relevant existence results for non-strictly hyperbolic Temple systems as yet. Our achievement in this paper is to convert the problem (\ref{eq1}) into an equivalent problem whose diffeomorphism equation is a strictly hyperbolic Temple system. Furthermore, we slightly extend a prior result on existence of global large data solutions for strictly hyperbolic Temple systems. In \cite{LT} this was proved by assuming both families to be genuinely nonlinear. In the current paper we develop an existence result along the lines of \cite{LT}, with suitable modifications to account for the case when one family is linearly degenerate. Finally we recover $\gamma$ from the Temple system and show the equivalence of solutions of the system with those of the original equation.\\

This paper is organized as follows. In Section 2, we show that a scalar conservation law with bounded initial data can be transformed to an equivalent Cauchy problem for a convex conservation law. Then we state the existence result (proved in Appendix A) for strictly hyperbolic Temple systems having one linearly degenerate and one genuinely nonlinear family. Finally we prove that weak solutions of the derived Temple system are equivalent to those of the original scalar equation. We refer to \cite{Bressan, Dafermos, Smoller} for definitions and results related to conservation laws that are used throughout the rest of this paper, specifically pg. 81-85 in \cite{Bressan} for definitions of admissibility conditions and entropy admissible weak solution.

\section{Weak Diffeomorphisms for Scalar Conservation Laws}
Given a scalar conservation law of the form (\ref{eq1}) where $f$ is a smooth flux and $\rho_0 \in I = [a,b]$ for any $a,b \in \mathbb{R}$, we rewrite it as
\begin{equation} \label{eq2}
\rho_t+(u\rho)_x=0\,,\quad \rho(x,0)=\rho_0(x)\,,\quad x\in \mathbb R\,
\end{equation}
where we define $u$ to be
\begin{equation} \label{udef1}
u\equiv F(\rho) =\frac{f(\rho)}{\rho}\,.
\end{equation}
The weak diffeomorphism corresponding to (\ref{eq2}) is defined by
\begin{equation} \label{gammadef1}
\dot\gamma = u\circ\gamma \quad \textrm{or} \quad \gamma_t(x,t)=u(\gamma(x,t),t)\, \quad \textrm{with}\quad \gamma(x,0)=x\,~.
\end{equation}
We show that for each fixed $t$, $\gamma$ defines a diffeomorphism of $x$ for smooth solutions and generalizes to a weak diffeomorphism when classical solutions no longer exist. This was achieved for a special case in \cite{HKT} where the authors assumed that $f$ was convex and $\rho, F, F'>0$, to deduce an equation for $\gamma$ using the invertibility of $F$. Motivated by \cite{HKT}, in this paper we provide a construction for weak diffeomorphisms for a Cauchy problem with bounded data and a smooth flux, not necessarily convex.\\

\subsection{Transforming the problem}
Given a scalar conservation law
\begin{equation}\label{maineq}
\rho_t+f(\rho)_x=0\,,\quad \rho(x,0)=\rho_0(x)\,,\quad x\in \mathbb R\,
\end{equation}
where $f$ is any smooth flux,
assume that $\rho_0 \in I = [a,b]$ for some $a,b\in \mathbb{R}$.\\

It is clear from (\ref{udef1}) that for $F$ to be well-defined, $\rho$ must be non-zero. Also, $\rho$ must maintain the same sign over $I$ so that the Riemann problem for the derived Temple system has a solution. Therefore if $I$ contains $0$, we use the following translation.\\

Given $\rho_0 \in I=[a,b]$ with $a<0<b$, choose a constant $L \in \mathbb{R}$ such that $L>a$ and define $\sigma := \rho +L$. Then (\ref{maineq}) becomes
\begin{equation}\label{eqL}
\sigma_t+f(\sigma - L)_x=0\,,\quad \sigma(x,0)=\sigma_0(x)\,,\quad x\in \mathbb{R}\,
\end{equation}
where $\sigma_0 \in \tilde{I}$ with $\tilde{I} = I + L\subset \mathbb{R}^{+}$.\\

From (\ref{eqL}),
\begin{equation}\label{newF}
u \equiv F(\sigma) = \frac{f(\sigma - L)}{\sigma}~.
\end{equation}
The equation for $\gamma$ was derived in \cite{HKT} by assuming that $F$ is invertible. In that case, from (\ref{newF}) taking $\sigma = b(u)$ where $b=F^{-1}$, (\ref{eqL}) becomes
\begin{equation}\label{geqn}
b'(u)u_t+ub'(u)u_x+b(u)u_x = 0~.
\end{equation}
Differentiating (\ref{gammadef1}) with respect to $t$ and substituting (\ref{geqn}),
\begin{equation}\label{geqn2}
\gamma_{tt} = -\left(\frac{b(u)}{b'(u)}\right)u_x \circ \gamma~.
\end{equation}
Differentiating (\ref{gammadef1}) with respect to $x$, 
\begin{equation}\label{ux}
u_x = \left(\gamma_t \circ \gamma^{-1}\right)_x = \left(\frac{\gamma_{xt}}{\gamma_x}\right)\circ \gamma^{-1}~.
\end{equation}
Using (\ref{ux}) to eliminate $u_x$ from (\ref{geqn2}) we obtain
\begin{equation}\label{2.10}
\left(\frac{b'(\gamma_t)}{b(\gamma_t)}\right)\gamma_{tt} = - \frac{\gamma_{xt}}{\gamma_x}~.
\end{equation}
Integrating (\ref{2.10}) from time $0$ to $t$ and using $\gamma_t(x,0) = F(\sigma_0(x))$, $\gamma_x(x,0) = 1$ results in
\begin{equation}
\ln\frac{b(\gamma_t)}{\sigma_0} = -\ln \gamma_x~
\end{equation}
which yields
\begin{equation}\label{geqn3}
b(\gamma_t) = \frac{\sigma_0}{\gamma_x}~.
\end{equation}
Since $b= F^{-1}$, (\ref{geqn3}) gives a Hamilton-Jacobi type equation
\begin{equation} \label{HJ1}
\gamma_t=F\left(\frac{\sigma_0}{\gamma_x}\right)
\end{equation}
which can be replaced by an equivalent conservation law system, with $\eta=\gamma_x$:
\begin{equation} \label{syst1}
\begin{split} \eta_t-F\left(\frac{v}{\eta}\right)_x&=0\,,\quad \eta(x,0)=1\,,\\
 v_t&=0\,,\quad v(x,0)=\sigma_0(x)\,.
 \end{split}
 \end{equation}
Proving that solutions of system \eqref{syst1} are equivalent to those of
the original equation \eqref{maineq} means showing that given an admissible weak solution $(\eta, v)$ to \eqref{syst1}, one can recover the admissible weak solution to \eqref{maineq} with the corresponding data.\\

The system (\ref{syst1}) has the structure of a Temple system, for which the shock and rarefaction waves coincide in state space. In quasilinear form, (\ref{syst1}) becomes  
\begin{equation}\label{temple}
{\begin{pmatrix}
\eta\\v
\end{pmatrix}}_t
+
\begin{pmatrix}
\frac{v}{\eta^2}F'\left(\frac{v}{\eta}\right) & -\frac{1}{\eta}F'\left(\frac{v}{\eta}\right)
\\
0 & 0
\end{pmatrix}
{\begin{pmatrix}
\eta\\v
\end{pmatrix}}_x
= 
\begin{pmatrix}
0\\0
\end{pmatrix}
\end{equation}
with eigenvalues $\lambda_1 = 0$ and $\lambda_2 = \frac{v}{\eta^2}F'\left(\frac{v}{\eta}\right)$.
The family corresponding to the eigenvalue $\lambda_1 = 0$ is linearly degenerate, while the family corresponding to the eigenvalue $\lambda_2$ is genuinely nonlinear. From
(\ref{newF}),
$$F'(\sigma) = \frac{\sigma f'(\sigma-L) - f(\sigma-L)}{\sigma^2}~.$$
Note that a system of the form (\ref{syst1}) associated with the scalar equation \eqref{maineq} via $\gamma$ is strictly hyperbolic only when $F$ is invertible. Existence of global large data solutions for strictly hyperbolic $2\times2$ Temple systems was proved in \cite{LT}. If $F'(\sigma) = 0$ for some $\sigma\in \tilde{I}$, we modify the conservation law in Lemma \ref{transf} to obtain an equation equivalent to (\ref{maineq}) such that the associated Temple system is strictly hyperbolic. The new equation and the corresponding system establish our results. Our process replaces $F$ with a velocity function $G$ where $G' > 0$.

\begin{lemma}\label{transf}
Given $f$ in (\ref{eqL}) such that $F'(\sigma) = 0 $ for some $\sigma \in \tilde{I}$, define $g(\sigma) = f(\sigma-L) + K$ and $G$ by
$$G(\sigma) = \frac{g(\sigma)}{\sigma}$$
where $K \in \mathbb{R}$ is a constant. 
If $K<\min\limits_{\sigma\in \tilde{I}}\{\sigma f'(\sigma - L)-f(\sigma - L)\}$, then $G' > 0$ and $G$ is invertible for $\sigma \in \tilde{I}$. Moreover, the system replacing $F$ in (\ref{syst1}) with $G$ is strictly hyperbolic.
\end{lemma}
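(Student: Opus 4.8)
The plan is to establish the two assertions directly, since both reduce to sign computations. First I would verify that the stated bound on $K$ forces $G' > 0$, which simultaneously yields invertibility; then I would show that positivity of $G'$ makes the second characteristic speed nonzero, which is exactly what strict hyperbolicity requires.

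For the first claim, I would compute $G'$ by the quotient rule. Since $g(\sigma) = f(\sigma - L) + K$ with $K$ constant, we have $g'(\sigma) = f'(\sigma - L)$, so
$$G'(\sigma) = \frac{g'(\sigma)\,\sigma - g(\sigma)}{\sigma^2} = \frac{\sigma f'(\sigma - L) - f(\sigma - L) - K}{\sigma^2}\,.$$
Because $\tilde{I} \subset \mathbb{R}^{+}$, the denominator is strictly positive on $\tilde{I}$, so the sign of $G'$ coincides with the sign of its numerator. The quantity $\sigma f'(\sigma - L) - f(\sigma - L)$ is continuous in $\sigma$ (as $f$ is smooth) on the compact interval $\tilde{I} = [a+L, b+L]$, so its minimum is attained; the hypothesis $K < \min_{\sigma \in \tilde{I}}\{\sigma f'(\sigma - L) - f(\sigma - L)\}$ then gives $\sigma f'(\sigma - L) - f(\sigma - L) - K > 0$ for every $\sigma \in \tilde{I}$. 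Hence $G' > 0$ on $\tilde{I}$, so $G$ is strictly increasing and therefore injective, i.e.\ invertible, on $\tilde{I}$.

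For the second claim, I would replace $F$ by $G$ in the quasilinear form (\ref{temple}), so that the eigenvalues become $\lambda_1 = 0$ and $\lambda_2 = \frac{v}{\eta^2}\,G'\!\left(\frac{v}{\eta}\right)$. Strict hyperbolicity is the condition $\lambda_1 \neq \lambda_2$, i.e.\ $\lambda_2 \neq 0$. The argument of $G'$ is $v/\eta = \sigma_0/\gamma_x$, which by (\ref{geqn3}) equals the solution value $\sigma \in \tilde{I}$, so $G'(v/\eta) > 0$ by the first part. Since $v = \sigma_0 > 0$ (as $\tilde{I} \subset \mathbb{R}^{+}$) and $\eta > 0$, it follows that $\lambda_2 > 0 = \lambda_1$, whence $\lambda_1 \neq \lambda_2$ and the modified system is strictly hyperbolic.

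There is no genuine obstacle here beyond careful bookkeeping of signs and domains; the computation is elementary once the quotient rule is applied. The one point requiring attention — rather than any real difficulty — is confirming that the argument $v/\eta$ appearing in $\lambda_2$ actually lies in $\tilde{I}$, where $G' > 0$ has been proved; this uses the identity $v/\eta = b(\gamma_t) = \sigma$ coming from the derivation leading to (\ref{geqn3}). The compactness of $\tilde{I}$ together with the smoothness of $f$ guarantees the minimum defining the admissible range of $K$ is finite and attained, so such a constant $K$ always exists.
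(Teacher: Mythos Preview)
Your proposal is correct and follows essentially the same approach as the paper: compute $G'$ via the quotient rule, observe that the hypothesis on $K$ makes the numerator positive on $\tilde{I}$, and then conclude $\lambda_2 = \frac{v}{\eta^2}G'(v/\eta) > 0 = \lambda_1$. Your write-up is in fact more careful than the paper's --- you justify that the minimum defining the admissible $K$ is attained and you explicitly address why $v/\eta$ lies in $\tilde{I}$ --- but the underlying argument is identical.
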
 
\begin{proof}
Here $G'$ is given by
$$
G'(\sigma) = \frac{\sigma f'(\sigma - L) - f(\sigma - L) - K}{\sigma^2}
$$
for $\sigma \in \tilde{I}$. Thus $G'>0$ if $\sigma f'(\sigma - L) - f(\sigma - L)>K$. We choose any $K<\min\limits_{\sigma\in \tilde{I}}\{\sigma f'(\sigma - L)-f(\sigma - L)\}$.  \\
\quad\\
Now for the modified system 
\begin{equation}\label{newsys}
\begin{split} \eta_t-G\left(\frac{v}{\eta}\right)_x&=0\,,\quad \eta(x,0)=1\,,\\
 v_t&=0\,,\quad v(x,0)=\sigma_0(x)\,,
 \end{split}
\end{equation}
we obtain $\lambda_2 = \frac{v}{\eta^2}G'\left(\frac{v}{\eta}\right) > 0$. Thus (\ref{newsys}) is strictly hyperbolic.
\end{proof}
\quad\\
{\bf Note:} For $\rho_0 \in [a,b]$, if $a>0$, then $L=0$ and $\sigma=\rho+0=\rho$. If $b<0$, then define $\sigma = -\rho$.
\subsection{Main Results}
In the previous section we transformed the Cauchy problem (\ref{maineq}) for a smooth flux $f$  into a strictly hyperbolic Temple system. The modified scalar equation is
\begin{equation}\label{neweq}
\sigma_t+g(\sigma)_x=0\,,\quad \sigma(x,0)=\sigma_0(x)\,,\quad x\in \mathbb R\,
\end{equation}
where $g$ is obtained from $f$ via Lemma \ref{transf} and $\sigma$ is obtained from $\rho$ so that $\sigma>0$ in $\tilde{I}$. Proposition \ref{eqv} shows that (\ref{neweq}) is equivalent to the original scalar equation (\ref{maineq}).\\

To prove the existence of global large data solutions for strictly hyperbolic Temple systems in \cite{LT}, Leveque and Temple assumed both families to be genuinely nonlinear and used Godunov's method. Here we extend their result to when one family is linearly degenerate. 
\begin{theorem}\label{exist}
Consider the Cauchy problem
\begin{equation}\label{newsyst}
\begin{split} \eta_t-G\left(\frac{v}{\eta}\right)_x&=0\,,\quad \eta(x,0)=1\,,\\
 v_t&=0\,,\quad v(x,0)=\sigma_0(x)\,
 \end{split}
\end{equation}
where $\sigma_0\in\tilde{I}$ has bounded total variation, $G\in\mathcal{C}^2$ and $G' > 0$ for $\sigma \in \tilde {I}$. 
Then (\ref{newsyst}) has a unique admissible weak solution $(\eta, v) \in \left(\mathcal{C}(0,\infty), BV^2 \right)$ for $t>0$.
\end{theorem}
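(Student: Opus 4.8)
The plan is to follow the Godunov-scheme strategy of \cite{LT}, adapting it to the linear degeneracy of the first family. I would begin by putting \eqref{newsyst} into Temple normal form via Riemann invariants. For the matrix in \eqref{temple} (with $G$ in place of $F$) the right eigenvectors are $r_1=(\eta,v)^{\top}$ for $\lambda_1=0$ and $r_2=(1,0)^{\top}$ for $\lambda_2$, so a $1$-Riemann invariant is $z:=v/\eta$ and a $2$-Riemann invariant is $w:=v$. In these coordinates the wave curves are straight lines, confirming the Temple structure, and the families decouple strongly: across the linearly degenerate $1$-family $z$ is constant while $w$ jumps, and across the genuinely nonlinear $2$-family $w$ is constant while $z$ jumps. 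Crucially, $v_t=0$ forces $w=v$ to be stationary, so the $1$-waves are stationary contacts sitting at a single point, while $z=v/\eta=\sigma$ obeys, formally, the scalar law \eqref{neweq}. This observation, that one Riemann invariant is frozen and the other is scalar-like, is what lets the estimates of \cite{LT} go through with one family linearly degenerate.

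Next I would solve the Riemann problem for \eqref{newsyst} and set up Godunov's method. Given states $(\eta_L,v_L)$ and $(\eta_R,v_R)$, the solution is a stationary $1$-contact, across which $v/\eta$ is preserved so the intermediate value is $\eta_L v_R/v_L$ and $v$ jumps from $v_L$ to $v_R$, followed by a $2$-wave at speed $\lambda_2>0$ solving the scalar Riemann problem for $\eta$ with frozen $v=v_R$ and flux $-G(v_R/\eta)$, admissibility of the $2$-wave being dictated by the Oleinik entropy condition for that scalar flux. Since $G'>0$ on $\tilde I$ and $v>0$, we have $0=\lambda_1<\lambda_2$, so the two waves never overlap and the solver is well defined. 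I would then observe that rectangles $\{\sigma_{\min}\le v/\eta\le\sigma_{\max}\}\cap\{v_{\min}\le v\le v_{\max}\}$ are bounded by straight lines in the $(\eta,v)$ plane, hence convex and invariant under the Riemann solver; since Godunov's update is a convex average, these rectangles are preserved by the scheme. Choosing the initial rectangle to contain the data gives uniform $L^\infty$ bounds keeping $\sigma=v/\eta$ inside $\tilde I$, so $G'$ stays positive and bounded, $\lambda_2$ stays in a fixed interval $(0,\Lambda]$, and a single CFL condition controls every step.

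The heart of the argument is the uniform total-variation bound, measured in the coordinates $(z,w)=(v/\eta,v)$. Here the linear degeneracy simplifies matters: the Godunov flux for $v$ vanishes identically, so the update leaves $v$ unchanged, giving $v_i^{n}=v_i^{0}$ for all $n$ and hence $TV(w)=TV(v)=TV(\sigma_0)$ conserved exactly. It then remains to bound $TV(z)=TV(\sigma)$. For a pure $2$-family the scheme is, after the change of variable $z=v/\eta$, a monotone Godunov scheme for the scalar conservation law \eqref{neweq}, hence total-variation diminishing. The genuinely new point, and the main obstacle, is the interaction of a right-moving $2$-wave with a stationary $1$-contact: I must show it produces no amplification of $TV(z)$. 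This is exactly where the Temple straight-line wave-curve geometry is used, as in \cite{LT}: because the $2$-wave curves are the lines $w=\mathrm{const}$ and the $1$-contact only relabels $w$ while preserving $z$, the outgoing variation in $z$ across the interaction is bounded by the incoming variation. Summing these local interaction estimates over the grid yields $TV(z^{n})\le TV(z^{0})$ uniformly in the mesh. I expect the careful bookkeeping of these linearly degenerate--genuinely nonlinear interactions, rather than any single inequality, to be the technical crux and the place where the proof departs from the all--genuinely-nonlinear case of \cite{LT}.

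With uniform $L^\infty$ and $BV$ bounds, Helly's theorem yields a subsequence of Godunov approximations converging in $L^1_{loc}$ and a.e.\ to a limit $(\eta,v)$ of bounded spatial variation, with $v(x,t)=\sigma_0(x)$ coming directly from the trivial $v$-update. A Lax--Wendroff argument identifies the limit as a weak solution of \eqref{newsyst}. For admissibility I would again use the scalar-like structure of the $2$-family: the Godunov approximations satisfy the discrete Kru\v{z}kov entropy inequalities for the flux in \eqref{neweq}, the stationary $1$-contacts being admissible automatically by linear degeneracy, and these inequalities pass to the limit, so $(\eta,v)$ is entropy admissible in the sense of \cite{Bressan}. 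The standard $L^1$ Lipschitz-in-time estimate $\|U^{n+1}-U^{n}\|_{L^1}\lesssim \Delta t\,TV(\sigma_0)$ gives continuity in time and hence the stated regularity $(\eta,v)\in(\mathcal C(0,\infty),BV^{2})$. Uniqueness follows by transporting to the scalar equation: under $\sigma=v/\eta$ with $v$ frozen, any two entropy-admissible solutions of \eqref{newsyst} yield entropy solutions of \eqref{neweq} with identical data, which agree by Kru\v{z}kov's theorem \cite{Dafermos}; inverting the monotone relation $\eta=v/\sigma$ recovers $\eta_1=\eta_2$. This completes the plan.
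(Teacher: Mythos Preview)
Your plan is essentially the paper's own proof: Godunov scheme, Riemann invariants $(p,q)=(v/\eta,v)$, an invariant rectangle in $(p,q)$, a TV estimate in those coordinates, Helly compactness, Lax--Wendroff consistency, a discrete entropy inequality via Jensen, and a uniqueness citation. The paper carries out exactly this outline in its Appendix~A, so at the level of strategy you are aligned.

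Two places deserve tightening. First, your TV argument for $z=v/\eta$ asserts that ``after the change of variable $z=v/\eta$ the scheme is a monotone Godunov scheme for the scalar law \eqref{neweq}.'' It is not: the Godunov average is taken in $(\eta,v)$, not in $z$, and the cell update $z_j^{n+1}=v_j^{n}/\bar\eta$ is not the Godunov step for any scalar law in $z$. What is true, and what the paper proves as its Claim~2, is that within each cell the Riemann solution lies on a single $2$-wave curve (so $q=v$ is constant there, confirming your $v$-freezing), and the averaged $\eta$ lies between its extreme values; hence $p(w_j^{n+1})$ lies between $p(w_{j-1}^{n})$ and $p(w_j^{n})$. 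From this ``betweenness'' one gets
\[
\|w_{j-1}^{n}-w_j^{n+1}\|+\|w_j^{n+1}-w_j^{n}\|=\|w_j^{n}-w_{j-1}^{n}\|
\]
in the $(p,q)$-norm, and a telescoping triangle inequality gives TV non-increase. That is the precise mechanism replacing your ``interaction bookkeeping,'' and it needs no separate treatment of $2$-waves crossing $1$-contacts.

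Second, your uniqueness step sets $\sigma=v/\eta$ and invokes Kru\v{z}kov for \eqref{neweq}. But the system lives in the Lagrangian label variable while \eqref{neweq} is Eulerian; the entropy solution of \eqref{neweq} is $(v/\eta)\circ\gamma^{-1}$, not $v/\eta$ itself, and $\gamma$ depends on the very solution whose uniqueness you want. A clean fix, closer to your own reduction, is to note that with $v=\sigma_0(x)$ frozen the first equation becomes the \emph{single} scalar law $\eta_t-G(\sigma_0(x)/\eta)_x=0$ with $x$-dependent flux, to which Kru\v{z}kov uniqueness applies directly; alternatively, as the paper does, simply invoke the $BV$ uniqueness theorem for systems (Theorem~9.4 in \cite{Bressan}).
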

\begin{proof}
The proof is given in Appendix A and proceeds as in \cite{LT}, with modifications to account for the linearly degenerate family in system (\ref{newsyst}).
\end{proof}
As a direct consequence of Theorem \ref{exist}, we have the corollary:
\begin{corollary} \label{thref}
Let $(\eta, v) \in \left(\mathcal{C}(0,\infty), BV^2 \right)$ be the admissible weak solution to the system (\ref{newsyst}). The distributional solution $\gamma$ to 
\begin{equation}\label{gammasol}
\gamma_x(x,t) = \eta(x,t)~~,~~\gamma_t(x,t) = G\left(\frac{v(x,t)}{\eta(x,t)}\right)~~,~~\gamma(x,0) = x
\end{equation}
is well-defined, absolutely continuous and invertible and $\gamma^{-1}$ is absolutely continuous. 
\end{corollary}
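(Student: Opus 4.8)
The plan is to construct $\gamma$ explicitly by integration, read off its regularity from $L^\infty$ bounds on $\eta$ and $v$, and reduce the invertibility of $x\mapsto\gamma(x,t)$ to a uniform positive lower bound on $\eta$. First I would settle well-definedness. The two prescriptions $\gamma_x=\eta$ and $\gamma_t=G(v/\eta)$ are compatible exactly when $\partial_t\eta=\partial_x G(v/\eta)$, which is precisely the first equation of (\ref{newsyst}) and holds in the distributional sense because $(\eta,v)$ is a weak solution from Theorem \ref{exist}. On the simply connected half-plane $\mathbb{R}\times[0,\infty)$ this integrability condition permits the definition
\begin{equation*}
\gamma(x,t):=x+\int_0^t G\!\left(\frac{v(x,s)}{\eta(x,s)}\right)ds,
\end{equation*}
for which $\gamma(x,0)=x$ and $\gamma_t=G(v/\eta)$ hold by construction. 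To recover $\gamma_x=\eta$ I differentiate in $x$ and substitute the weak form of the PDE: $\gamma_x=1+\int_0^t\partial_x G(v/\eta)\,ds=1+\int_0^t\partial_s\eta(x,s)\,ds=\eta(x,t)$, using $\eta(x,0)=1$. In the $BV$ setting these manipulations are carried out against test functions, so no pointwise differentiability of $\eta,v$ is required.

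Regularity follows once bounds on the fields are available. Since $\eta,v\in BV$ they are bounded, and the lower bound on $\eta$ established below confines $v/\eta$ to a compact subset of $\tilde I$ on which the $\mathcal{C}^2$ function $G$ is bounded. Hence both $\gamma_x=\eta\in L^\infty$ and $\gamma_t=G(v/\eta)\in L^\infty$, so $\gamma$ is Lipschitz in $(x,t)$; in particular $\gamma(\cdot,t)$ is absolutely continuous for each fixed $t$.

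The decisive step, and the main obstacle, is the uniform positive bound on $\eta$ that forces invertibility, and here I exploit the Temple structure. The right eigenvectors of the quasilinear form of (\ref{newsyst}) are $r_1=(\eta,v)^T$ for $\lambda_1=0$ and $r_2=(1,0)^T$ for $\lambda_2$; along $r_1$ the quantity $\sigma:=v/\eta$ is constant and along $r_2$ the quantity $v$ is constant, so $(\sigma,v)$ serve as Riemann coordinates. The data satisfies $v=\sigma_0\in\tilde I$ (and $v_t=0$ keeps $v$ frozen for all time) and $v/\eta=\sigma_0\in\tilde I$, hence lies in the rectangle $\{\sigma\in\tilde I,\ v\in\tilde I\}$. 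Because rectangles in Riemann coordinates are invariant regions for Temple systems and each Riemann invariant obeys a maximum principle, the solution satisfies $\sigma=v/\eta\in\tilde I$ for all $t$. Writing $m:=\min\tilde I>0$ and $M:=\max\tilde I$ (recall $\tilde I\subset\mathbb{R}^+$), I obtain $\eta=v/\sigma\in[m/M,\,M/m]\subset(0,\infty)$, i.e. a two-sided bound $0<c\le\eta\le C$.

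Finally, with $\eta\ge c>0$ the map $x\mapsto\gamma(x,t)$ is strictly increasing, since $\gamma(x_2,t)-\gamma(x_1,t)=\int_{x_1}^{x_2}\eta\,dx\ge c(x_2-x_1)$ for $x_1<x_2$; being continuous with $\gamma\to\pm\infty$ as $x\to\pm\infty$, it is a bijection of $\mathbb{R}$, which gives invertibility. The same inequality shows $\gamma^{-1}(\cdot,t)$ is Lipschitz with constant $1/c$, hence absolutely continuous, completing all four assertions. I expect every step except the invariant-region bound on $v/\eta$ to be routine; that bound is the one genuinely nontrivial ingredient, and everything else is bookkeeping once the positive two-sided bounds on $\eta$ are in hand.
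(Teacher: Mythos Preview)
Your argument is correct, but the route differs from the paper's in one substantive place. The paper's proof is terse on well-definedness and regularity---it simply observes that for each fixed $t$, $\gamma$ is an antiderivative of a strictly positive $BV$ function and invokes the Fundamental Theorem of Calculus for Lebesgue integrals to conclude $\gamma(\cdot,t)$ is absolutely continuous and strictly increasing. You supply more detail here (the explicit integral formula, the compatibility check via the PDE, and the import of the invariant-region bound $\eta\in[m/M,M/m]$ from Appendix~A), which is harmless and arguably clearer. The genuine divergence is in showing $\gamma^{-1}$ absolutely continuous: the paper argues via the Lusin N-property, decomposing the range of $\gamma$ into the image of the full-measure set where $\gamma'$ is finite and positive plus a null set, and checking that $\gamma^{-1}$ preserves null sets on each piece. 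You instead exploit the uniform lower bound $\eta\ge m/M>0$ to conclude directly that $\gamma^{-1}(\cdot,t)$ is Lipschitz with constant $M/m$, which is both shorter and strictly stronger than absolute continuity. Your approach is the more efficient one given that the quantitative bound on $\eta$ is already available from the invariant-region analysis; the paper's Lusin-N argument would still go through if one only knew $\eta>0$ a.e.\ without a uniform lower bound, so it is more robust but overkill here.
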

\begin{proof}
For fixed $t$, $\gamma$ is an antiderivative of a strictly positive function of bounded variation. Therefore it follows using the Fundamental Theorem of Calculus for Lebesgue Integrals (Theorem 3.35 in \cite{Folland}) that $\gamma$ is an absolutely continuous function of $x$ and is invertible.\\

The function $\gamma$ is strictly increasing in $x$ and differentiable almost everywhere by the same theorem. Thus the inverse in $x$ of $\gamma$ is continuous and strictly increasing. Let $X$ be the set of points at which $\gamma$ has a finite, positive derivative and let $Y$ be the collection of the remaining points. So $Y$ is a set of measure zero. As $\gamma$ is absolutely continuous, it satisfies the Lusin N-property  (Lemma 7.25 in \cite{Rudin}), implying that $\gamma(Y)$ is also a set of measure zero. Let $A=\gamma(X)$ and $B=\gamma(Y)$. We know that $\gamma^{-1}$ has finite positive derivative at every point in $A$ and $B$ is a set of measure zero. Thus $\gamma^{-1}$ maps any measure zero subset of $A$ to a set of measure zero and also maps every subset of $B$ to a set of measure zero, i.e., a subset of Y. Therefore $\gamma^{-1}$ satisfies the Lusin N-property as well, which shows that it is absolutely continuous.
\end{proof}
\quad\\
This leads to our main result, where we prove that admissible weak solutions to (\ref{neweq}) can be recovered from those of (\ref{newsyst}).

\begin{theorem}\label{mainthm}
Define
\begin{equation}\label{soln}
\sigma := \frac{v\left(\gamma^{-1}(x,t),t\right)}{\eta\left(\gamma^{-1}(x,t),t\right)}
\end{equation}
where $(\eta, v)$ is the admissible weak solution to (\ref{newsyst}) and $\gamma$ is defined in Corollary \ref{thref}. Then $\sigma$ is an admissible weak solution to (\ref{neweq}).
\end{theorem}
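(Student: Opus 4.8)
The plan is to verify the two defining properties of an admissible weak solution separately: first that $\sigma$ in \eqref{soln} is a distributional solution of \eqref{neweq} carrying the correct initial data, and then that it satisfies every convex entropy inequality. Both parts rest on transporting the Eulerian integral identities to the Lagrangian $(y,t)$ variables through the weak diffeomorphism $\gamma$ of Corollary \ref{thref}, using $x=\gamma(y,t)$, $dx=\eta\,dy$, and $\gamma^{-1}(x,t)=y$, so that $\sigma(\gamma(y,t),t)=v(y,t)/\eta(y,t)$.

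For the weak solution, I would start from the scalar weak form $\iint[\sigma\phi_t+g(\sigma)\phi_x]\,dx\,dt+\int\sigma_0\,\phi(\cdot,0)\,dx$ for a test function $\phi$ and change variables via $x=\gamma(y,t)$, $dx=\eta\,dy$. Writing $\Phi(y,t):=\phi(\gamma(y,t),t)$ and using $g(\sigma)=\sigma G(\sigma)$ with $\sigma=v/\eta$, the Jacobian factor $\eta$ cancels and, since $\gamma_t=G(v/\eta)$, the integrand collapses to $v\,(\phi_t+\gamma_t\phi_x)=v\,\Phi_t$. Because $v_t=0$ forces $v(y,t)=\sigma_0(y)$, integrating by parts in $t$ and invoking $\gamma(y,0)=y$, $\eta(y,0)=1$ cancels this contribution exactly against the initial-data integral, while along the way one reads off $\sigma(x,0)=\sigma_0(x)$. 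The algebraic identity $g=\sigma G$ built into the construction is precisely what forces this collapse.

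For admissibility, I would set up a correspondence between convex entropy pairs. Given a convex scalar entropy $\Psi$ with flux $Q$ (so $Q'=\Psi'g'$), I would introduce the candidate system pair
\[
\mathcal{E}(\eta,v)=\eta\,\Psi\!\left(\tfrac{v}{\eta}\right),\qquad
\mathcal{F}(\eta,v)=Q\!\left(\tfrac{v}{\eta}\right)-\Psi\!\left(\tfrac{v}{\eta}\right)G\!\left(\tfrac{v}{\eta}\right).
\]
The perspective function $\mathcal{E}$ is jointly convex on $\{\eta>0\}$ whenever $\Psi$ is convex, and a short computation using $g'(\sigma)=G(\sigma)+\sigma G'(\sigma)$ verifies the compatibility relation $\nabla\mathcal{F}=(DH)^{T}\nabla\mathcal{E}$ for the system flux $H=(-G(v/\eta),0)$, so $(\mathcal{E},\mathcal{F})$ is a genuine convex entropy pair of \eqref{newsyst}. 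Since the admissible solution of Theorem \ref{exist} is obtained as a limit of Godunov approximations as in \cite{LT}, it dissipates every such entropy, i.e.\ $\mathcal{E}_t+\mathcal{F}_y\le 0$ distributionally. Repeating the change of variables with $\phi\ge 0$, now writing $\phi_x=\Phi_y/\eta$ and $\phi_t=\Phi_t-\gamma_t\Phi_y/\eta$, turns the Eulerian entropy expression into $\iint[\mathcal{E}\,\Phi_t+\mathcal{F}\,\Phi_y]\,dy\,dt$ plus matching boundary terms; since $\Phi=\phi\circ\gamma\ge 0$ is an admissible test function, the system inequality yields $\Psi(\sigma)_t+Q(\sigma)_x\le 0$, which is the scalar entropy admissibility for the arbitrary pair $(\Psi,Q)$.

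The main obstacle I anticipate is making the change of variables rigorous despite $\gamma$ being only a weak diffeomorphism rather than a smooth map. This is where Corollary \ref{thref} is essential: absolute continuity of both $\gamma$ and $\gamma^{-1}$, together with the Lusin N-property, validates the formula $dx=\eta\,dy$ and guarantees that no mass is created or lost, while the Lipschitz bounds on $\gamma$ (from $\eta$ bounded and $\gamma_t=G(v/\eta)$ bounded) make $\Phi=\phi\circ\gamma$ a compactly supported Lipschitz test function and justify the a.e.\ chain rule $\Phi_t=\phi_t+\gamma_t\phi_x$. Handling these measure-theoretic details, together with confirming that $v/\eta$ remains in the bounded, strictly positive range $\tilde{I}$ so that $\sigma\in L^{\infty}$, is the technical heart of the argument.
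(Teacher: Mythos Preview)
Your proposal is correct and follows essentially the same route as the paper: both transport the scalar weak formulation through the change of variables $x=\gamma(y,t)$ using $g(\sigma)=\sigma G(\sigma)$ so that the integrand collapses to $v\,\Phi_t$, and both build the system entropy pair $\mathcal{E}(\eta,v)=\eta\,\Psi(v/\eta)$, $\mathcal{F}(\eta,v)=Q(v/\eta)-\Psi(v/\eta)G(v/\eta)$ to transfer admissibility. Your write-up is in fact a bit more explicit than the paper's about the measure-theoretic justification of the change of variables and about the logical direction of the entropy argument, but the underlying computation is identical.
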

\begin{proof} With $\gamma^{-1}$ defined in Corollary \ref{thref}, construct $\sigma$ as in (\ref{soln}). Let $\varphi(x,t)$ be a test function, i.e. $\varphi\in\mathcal{C}^1$ and has compact support. Recalling $g(\sigma) = \sigma G(\sigma)$, we have
\begin{align*}
I &:= \iint\left(\varphi_t\sigma + \varphi_x g(\sigma)\right)~dxdt\\
&= \iint \left(\varphi_t\left(\frac{v}{\eta}\right)\circ \gamma^{-1} + \varphi_x\cdot\left(\frac{v}{\eta}G\left(\frac{v}{\eta}\right)\right)\circ \gamma^{-1}\right)~dxdt~.
\end{align*}
Using the change of variable $(x,t)\mapsto(\gamma(x,t),t)$
\begin{equation}\label{istep}
I = \iint \left(\left(\varphi_t\circ \gamma\right)\left(\frac{v}{\eta}\right)+\left(\varphi_x\circ \gamma\right)\left(\frac{v}{\eta}G\left(\frac{v}{\eta}\right)\right)\right)\eta~dxdt~.
\end{equation}
Now,
\begin{equation}\label{par1}
\frac{\partial}{\partial t}(\varphi(\gamma(x,t),t)) = \varphi_x(\gamma,t)\gamma_t + \varphi_t(\gamma,t)~,
\end{equation}
\begin{equation}\label{par2}
\frac{\partial}{\partial x}(\varphi(\gamma(x,t),t)) = \varphi_x(\gamma,t)\gamma_x~.
\end{equation}
Let $\tilde{\varphi}(x,t) = \varphi(\gamma(x,t),t)$. From (\ref{par1})-(\ref{par2}) we get
$$\varphi_t\circ \gamma = \tilde{\varphi}_t - \frac{\tilde{\varphi}_x}{\gamma_x}\gamma_t~~~\text{and}~~~\varphi_x\circ \gamma = \frac{\tilde{\varphi}_x}{\gamma_x}~.$$
Then (\ref{istep}) becomes
\begin{align}\label{cov}
\begin{split}
I
 &= \iint\left(\tilde{\varphi}_tv + \tilde{\varphi}_x\left(\frac{v}{\eta}G\left(\frac{v}{\eta}\right) - \frac{v}{\eta}G\left(\frac{v}{\eta}\right)\right)\right)~dxdt\\
 &= \iint \tilde{\varphi}_tv~dxdt\\
 &= -\int \tilde{\varphi}(x,0)\sigma_0(x)~dx~.
 \end{split}
\end{align}
Thus $\sigma$ as defined in (\ref{soln}) using the weak solution to the system (\ref{newsyst}), satisfies the definition of a weak solution to the equation (\ref{neweq}).\\

Next we establish that admissibility conditions for the system (Section 4.4 in \cite{Bressan}) correspond to admissibility conditions for the scalar conservation law. In order to do so, we prove that given a convex extension for (\ref{neweq}), we can recover a convex extension for (\ref{newsyst}) and vice versa. 

\begin{definition}
A pair of $\mathcal{C}^1$ functions $(\mathcal{E}, \mathcal{Q}):\mathbb{R} \to\mathbb{R}$ is an entropy-entropy flux pair for (\ref{neweq}) if
\begin{equation}\label{EEF}
\mathcal{Q}'(\sigma)~=~\mathcal{E}'(\sigma)\cdot g'(\sigma)
\end{equation}
at every $\sigma$ where $\mathcal{E}, \mathcal{Q}$ and $f$ are differentiable. In particular, if $\mathcal{E}$ is convex and smooth, then the pair $(\mathcal{E}, \mathcal{Q})$ is called a {\bf convex extension}.
\end{definition}

A convex extension $(\bar{\mathcal{E}}, \bar{\mathcal{Q}})$ of (\ref{newsyst}) (Definition 4.4 in \cite{Bressan}) satisfies the equation $\bar{\mathcal{E}}_t+ \bar{\mathcal{Q}}_x = 0$ for smooth variables. From this, using the chain rule and (\ref{newsyst}) we obtain
\begin{equation*}
\bar{\mathcal{Q}}_{\eta} = \bar{\mathcal{E}}_{\eta}\cdot\frac{v}{\eta^2}G'\left(\frac{v}{\eta}\right)~~\text{and}~~
\bar{\mathcal{Q}}_{v} = -\bar{\mathcal{E}}_{\eta}\cdot\frac{1}{\eta}G'\left(\frac{v}{\eta}\right)~.
\end{equation*}
Equating second order mixed partial derivatives yields $\bar{\mathcal{E}}_{\eta\eta} + \frac{v}{\eta}\bar{\mathcal{E}}_{\eta v}=0$ and solving this partial differential equation we get
\begin{align}\label{conv}
\begin{split}
\bar{\mathcal{E}}\left( \frac{v}{\eta} \right) &= \eta\cdot S\left( \frac{v}{\eta} \right),\\
\bar{\mathcal{Q}}\left( \frac{v}{\eta} \right) &= - S\left(\frac{v}{\eta}\right)G\left(\frac{v}{\eta}\right) + \int^{v/\eta} S'(x)G(x)~dx
\end{split}
\end{align}
where $\bar{\mathcal{E}}$ is convex if $S''>0$.\\

Now we return to the proof of Theorem \ref{mainthm}. Suppose that $(\mathcal{E}_1, \mathcal{Q}_1)$ is a convex extension for the scalar equation (\ref{neweq}), i.e., $\mathcal{Q}_1'(\sigma) = \mathcal{E}_1'(\sigma)g'(\sigma)$. Then $\sigma$ is an admissible solution to (\ref{neweq}), if for any test function $\psi$,
\begin{equation}\label{adm}
J:= \iint\left(\psi_t \mathcal{E}_1 + \psi_x \mathcal{Q}_1\right)~dxdt~\geq~0~.
\end{equation}
Now we use a change of variable $(x,t)\mapsto(\gamma(x,t),t)$ and rewrite $J$ as
\begin{equation}\label{adm1}
J = \iint \left(\psi_t (\gamma, t)\mathcal{E}_1 \left(\frac{v}{\eta}\right)+ \psi_x (\gamma, t)\mathcal{Q}_1\left(\frac{v}{\eta}\right)\right)\eta~dxdt.
\end{equation}
Assuming (\ref{adm}), applying (\ref{par1})-(\ref{par2}) to (\ref{adm1}) and using the notation $\tilde{\psi}(x,t) = \psi(\gamma(x,t),t)$, we proceed as in (\ref{cov}) to deduce that
\begin{equation}
J:= \iint \left(\tilde{\psi}_t \mathcal{E}_2(\eta, v) + \tilde{\psi}_x \mathcal{Q}_2(\eta, v)\right)~dxdt~\geq~0~,
\end{equation}
where $\mathcal{E}_2(\eta, v) = \eta \mathcal{E}_1 \left(\frac{v}{\eta}\right)$ and $\mathcal{Q}_2(\eta, v) = \mathcal{Q}_1\left(\frac{v}{\eta}\right) - G\left(\frac{v}{\eta}\right)\mathcal{E}_1\left(\frac{v}{\eta}\right)$. Thus $\mathcal{E}_2$ is of the form obtained in (\ref{conv}) and $(\mathcal{E}_2)_t + (\mathcal{Q}_2)_x = 0$ whenever $(\eta, v)$ is a classical solution to the system (\ref{newsyst}). Furthermore, as $\mathcal{E}_1$ is convex, it follows by a direct computation that the Hessian matrix of $\mathcal{E}_2$ is positive semi-definite. For the reverse implication, consider a convex entropy $\bar{\mathcal{E}}$ for the system (\ref{newsyst}) as defined in (\ref{conv}). Then the convex function $S$ in (\ref{conv}) is a convex entropy for (\ref{neweq}). 
\medskip

This completes the proof.\\
\end{proof}

In conclusion, we observe that $\rho$ is an admissible weak solution to the Cauchy problem for the scalar conservation law (\ref{maineq}) where $\rho = \sigma - L$ and $\sigma$ is given by (\ref{soln}).

\begin{proposition}\label{eqv}
Equations (\ref{maineq}) and (\ref{neweq}) are equivalent in the sense that entropy admissible weak solutions of  (\ref{neweq}) are equivalent to those of (\ref{maineq}).
\end{proposition}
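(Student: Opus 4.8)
The plan is to reduce the asserted equivalence to two elementary observations about the operations that produce (\ref{neweq}) from (\ref{maineq}): the affine shift $\sigma = \rho + L$ of the dependent variable and the addition of the constant $K$ to the flux in Lemma \ref{transf}. The crucial point, in contrast with the change of variables used in Theorem \ref{mainthm}, is that neither operation touches the independent variables $(x,t)$; hence test functions are carried over unchanged and no Jacobian factors intervene, which makes both the weak and the entropy formulations transform transparently.

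First I would dispose of the constant $K$. Because $g(\sigma) = f(\sigma - L) + K$ with $K$ constant, we have $g(\sigma)_x = f(\sigma-L)_x$ as distributions, so for every test function $\varphi$ the two integrands in the weak formulations of (\ref{neweq}) and of the intermediate equation (\ref{eqL}) differ only by $\varphi_x K$, whose integral vanishes since $\varphi$ has compact support. Thus $\sigma$ is a weak solution of (\ref{neweq}) if and only if it is a weak solution of (\ref{eqL}). Next I would treat the shift: substituting $\rho = \sigma - L$ gives $\rho_t = \sigma_t$ and $f(\rho) = f(\sigma - L)$, so the weak formulation of (\ref{maineq}) becomes that of (\ref{eqL}) verbatim, with the same test functions and data $\rho_0 = \sigma_0 - L$. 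The map $\sigma \mapsto \sigma - L$ and its inverse therefore establish a bijection between weak solutions of (\ref{eqL}) and of (\ref{maineq}).

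For the entropy admissibility I would use that $g'(\sigma) = f'(\sigma - L)$. Consequently an entropy--entropy flux pair $(\mathcal{E}, \mathcal{Q})$ for (\ref{neweq}), defined by $\mathcal{Q}'(\sigma) = \mathcal{E}'(\sigma)\,g'(\sigma)$, corresponds under the shift to the pair $\big(\mathcal{E}(\cdot + L), \mathcal{Q}(\cdot + L)\big)$, which satisfies the entropy relation for (\ref{maineq}) because $g'(\rho + L) = f'(\rho)$; moreover convexity is preserved, since the second derivative of $\mathcal{E}(\cdot + L)$ at $\rho$ equals $\mathcal{E}''(\rho + L) > 0$. As the independent variables are untouched, the entropy inequality is transported as an identity of distributions: $\mathcal{E}(\sigma(x,t)) = \mathcal{E}(\rho(x,t)+L)$ pointwise, and likewise for the flux, so $\mathcal{E}(\sigma)_t + \mathcal{Q}(\sigma)_x \le 0$ holds exactly when the corresponding inequality for $\rho$ does. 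Combining this with the bijection of weak solutions above yields the claimed equivalence of entropy admissible weak solutions.

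Finally I would record the remaining cases noted after Lemma \ref{transf}: when $a > 0$ this is the trivial instance $L = 0$, and when $b < 0$ the reflection $\sigma = -\rho$ replaces the flux by $-f(-\sigma)$ and, being again an affine involution of the dependent variable, preserves the weak and entropy structure by the same argument, up to the sign bookkeeping in the entropy flux. I do not anticipate a genuine obstacle here, since every transformation acts solely on the dependent variable; the only step demanding care is verifying that entropy pairs match under the shift (and reflection) and that convexity is not lost, and this is a routine differentiation requiring no existence or compactness input.
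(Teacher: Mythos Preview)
Your proposal is correct and takes essentially the same approach as the paper: both use that the affine shift $\sigma=\rho+L$ transports entropy--entropy flux pairs by composition and that $g'(\sigma)=f'(\sigma-L)$ since the added constant $K$ vanishes under differentiation, so the entropy inequality transfers verbatim. Your version is slightly more explicit in separating the roles of $K$ and $L$ and in treating the boundary cases $a>0$ and $b<0$, but the core argument is identical.
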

\begin{proof}
The equation (\ref{neweq}) is obtained from (\ref{maineq}) via Lemma 2.1. Given a convex extension $(\mathcal{E}_1, \mathcal{Q}_1)$ for (\ref{maineq}), it satisfies $\mathcal{Q}_1'(\rho) = \mathcal{E}_1'(\rho)f'(\rho)$ for $\rho\in I$. As $\rho$ is an admissible weak solution to (\ref{maineq}), for any test function $\varphi$,
\begin{equation*}
\iint \left(\varphi_t \mathcal{E}_1(\rho) + \varphi_x \mathcal{Q}_1(\rho)\right)~dxdt~\geq~0
\end{equation*}
which means
\begin{equation*}
\iint \left(\varphi_t \mathcal{E}_1(\sigma - L) + \varphi_x \mathcal{Q}_1(\sigma - L)\right)~dxdt~\geq~0~.
\end{equation*}
Taking $\mathcal{E}_2(\sigma) = \mathcal{E}_1(\sigma - L)$ and $\mathcal{Q}_2(\sigma) = \mathcal{Q}_1(\sigma - L)$ yields 
\begin{equation*}
\iint \left(\varphi_t \mathcal{E}_2(\sigma) + \varphi_x \mathcal{Q}_2(\sigma)\right)~dxdt~\geq~0
\end{equation*}
for any test function $\varphi$, where $\mathcal{Q}_2'(\sigma) = \mathcal{E}_2'(\sigma)g'(\sigma)$ and $\sigma$ is an admissible weak solution to (\ref{neweq}). Therefore $(\mathcal{E}_2, \mathcal{Q}_2)$ is a convex extension for (\ref{neweq}) obtained from a convex extension of (\ref{maineq}) and the reverse follows similarly. This implies entropy admissible weak solutions of (\ref{maineq}) and (\ref{neweq}) are equivalent.
\end{proof}

\appendix
\section*{Appendix A}
\renewcommand{\theequation}{A.\arabic{equation}}
In this appendix, we provide details for the proof of Theorem 2.2 based on results and methods from \cite{HLL, LW, LT}.\\

The following procedure uses Godunov's method for approximating hyperbolic conservation laws by finite differences. This method chooses a piecewise constant approximation of the data and solves a Riemann problem at each discontinuity. The state variables are then averaged over each space interval. To define a piecewise constant approximation at the next time step requires that waves emerging from one inter-cell boundary not interact with waves created at adjacent boundaries. Controlling the Courant number as shown in (\ref{courant}) avoids such wave interactions.\\

The system (\ref{newsyst}) is a Temple system, for which shock and rarefaction curves are straight lines that coincide in $\eta-v$ space. For systems having two genuinely nonlinear families, \cite{LT} shows the existence of an invariant region. However contact discontinuity curves may not be straight lines and the existence of an invariant region is unclear. In fact, the shape of contact discontinuity curves in $\eta-v$ space depends on the function $G(\eta, v)$.\\

In our problem, $G$ is a function of $\frac{v}{\eta}$ and the right eigenvectors corresponding to the eigenvalues $\lambda_1 = 0$ and $\lambda_2 = \frac{v}{\eta^2}G'\left(\frac{v}{\eta}\right)$ are
$${\bf r_1} = \begin{pmatrix}
1\\\frac{v}{\eta}
\end{pmatrix} ~~\text{and}~~
{\bf r_2} = \begin{pmatrix}
1\\0
\end{pmatrix} ~.$$
Thus the contact discontinuity curves are rays from the origin. From the given initial data in (\ref{newsyst}), we see that there is a potential invariant region in the form of a quadrilateral {\bf Q} whose vertices are given by $(m/M, m),(1, m), (M/m, M) $ and $(1, M)$, where $0<m\leq\sigma_0\leq M$. The sides of {\bf Q} are either contact discontinuity curves corresponding to $\lambda_1$, i.e., rays from the origin, or are horizontal lines in the $\eta-v$ plane with $\lambda_2$ decreasing as $\eta$ increases. In Claim 1 below we establish that {\bf Q} is indeed an invariant region for our problem. Hence, we can use the arguments related to Godunov's method from \cite{LT} that depend on having an invariant region in the form of a quadrilateral, with alterations to accommodate the fact that one family is linearly degenerate and the other is genuinely nonlinear.\\

For our problem, the 1-Riemann invariant is given by $\frac{v}{\eta}$ and the 2-Riemann invariant is $v$, with $i$-th Riemann invariant for $i=1,2$ defined as in \cite{LT}. We denote these by $p$ and $q$ respectively. Let $w=(\eta,v)$ and $P=\left(-G\left(\frac{v}{\eta}\right), 0\right)$. The Riemann problem for (\ref{newsyst}) is
\begin{align} \label{RP}
w_t + P(w)_x &= 0\\
w(x,0)&=w^0(x) = \label{ICapp}
\begin{cases}
w_L,~~x\leq 0\\
w_R,~~x>0
\end{cases}
~.
\end{align}
To apply Godunov's method, take $h$ to be the mesh length in $x$ and $k$ to be a time step such that 
\begin{equation}\label{courant}
\frac{k}{h} \sup\limits_{w\in{\bf Q}} |\lambda_i(w)|~<~1~.
\end{equation}
For $j\in\mathbb{Z}$ and $n\in\mathbb{N}$, let $x_j = jh$ and $t_n = nk$. Then the approximation to the initial condition is defined at mesh points by
\begin{equation}\label{pieceIC}
w_j^0 = \frac{1}{h}\int_{x_{j-1}}^{x_j}w^0(x) dx~.
\end{equation}
Recall that {\bf Q} is a quadrilateral containing $w_L$ and $w_R$. In $(p,q)$ coordinates {\bf Q} is a rectangle. So if $w^0(x)\in {\bf Q}$ for all $x$, $w_j^0$ lies in {\bf Q} for all $j$. For every $j$, let $\tilde{w}(x, t_{n}) = w_{j}^{n}$ denote the approximate solution at time $t_{n}$ for $x\in(x_{j-1}, x_j]$. The piecewise constant data at each $t_n$ give a Riemann problem at every $x_j$. Solving the local Riemann problem at each $x_j$, let $\tilde{w}(x, t_{n+1})$ denote the solution in $[t_{n}, t_{n+1})$. By (\ref{courant}), the waves that emanate from $(x_j, t_n)$ do not intersect before the next time step $t_{n+1}$. We average the local solution in each interval by defining
\begin{equation}\label{wjdef}
w_j^{n+1} = \frac{1}{h}\int_{x_{j-1}}^{x_j}\tilde{w}(x, t_{n+1}) dx~
\end{equation}
to approximate $w$ at $t_{n+1}$. 
As one family has characteristic speed 0, there is a discontinuity at each $x_j$. Define the extension $w_h$ of the grid function as 
\begin{equation}\label{whdef}
w_h(x,t) = w_j^{n+1}~~\text{for}~~ (x,t)\in (x_{j-1}, x_j] \times[t_{n}, t_{n+1})~.
\end{equation}
Our aim is to show that given a sequence of mesh lengths $\{h_l\}$ converging to $0$, there is a subsequence of $\{h_l\}$ for which $w_h(x,t)$ converges boundedly a.e. to a function $w(x,t)$ and that $w$ is a weak solution to the Riemann problem (\ref{RP}).\\
\quad\\
The following claims lead to our result.\\
\quad\\
{\bf Claim 1:} {\bf Q} is an invariant region for the Riemann problem (\ref{RP}): if $w^0 \in {\bf Q}$ then $w_j^n \in {\bf Q}$ for all $j, n$.
\begin{proof}
We know that {\bf Q} is a quadrilateral and $w_L, w_R\in {\bf Q}$. The solution to the Riemann problem always consists of a contact discontinuity with speed 0 and a positive speed shock or rarefaction, with $p$ constant on the contact discontinuity and $q$ constant on the other wave. In $(p,q)$ coordinates {\bf Q} is a rectangle. Given $n$, if $w_j^{n}$ is in the rectangle {\bf Q} for all $j$, ${\bf Q}$ contains $\tilde{w}(x, t_{n+1})$ for $x \in [x_{j-1}, x_j)$. This implies $w_j^{n+1}\in {\bf Q}$ as it is the average of all $\tilde{w}(x, t_{n+1})$ over $[x_{j-1}, x_j)$ at $t_{n+1}$. Thus, the claim follows by induction.
\end{proof}
Define distance between two points as
\begin{equation}\label{norm}
\|w_L - w_R\| = |p(w_L) - p(w_R)| + |q(w_L) - q(w_R)|~.
\end{equation}
\quad\\
{\bf Claim 2:} For all $j,n$, the following hold:
\begin{enumerate}
\item $\|w_{j-1}^{n} - w_j^{n+1}\| + \|w_j^{n+1} - w_j^{n}\|~=~\|w_j^{n} - w_{j-1}^{n}\|$.
\item $
\sum\limits_j \|w_j^{n+1}-w_{j-1}^{n+1}\|~\leq~\sum\limits_j \|w_j^{n}-w_{j-1}^{n}\|
$.
\end{enumerate}
\begin{proof}  \begin{enumerate}
\item  For any $j$ and $n$, $p(w_{j-1}^{n}) -  p(w_{j}^{n}) = p(w_{j-1}^{n}) - p(w_j^{n+1}) + p(w_j^{n+1}) -  p(w_{j}^{n})$. In $(p,q)$ coordinates, $w_j^{n+1}$ is the intersection of a horizontal line from $w_{j-1}^{n}$ and a vertical line from $w_{j}^{n}$. Note that $w_j^{n+1}$ is obtained by averaging the local solution in $[x_{j-1}, x_j)$ at $t_{n+1}$. Therefore both its coordinates lie between the coordinates of $w_{j-1}^{n}$ and $w_{j}^{n}$, so $|p(w_{j-1}^{n}) - p(w_j^{n+1})|$ and $|p(w_j^{n+1}) -  p(w_{j}^{n})|$ have the same sign. Thus,
\begin{equation*}
|p(w_{j-1}^{n}) - p(w_j^{n+1})| + |p(w_j^{n+1}) -  p(w_{j}^{n})| = |p(w_{j-1}^{n}) -  p(w_{j}^{n})|
\end{equation*}
and a similar equality holds for $q$. Hence, (1) follows using (\ref{norm}).\\
\item By the triangle inequality and part 1,
\begin{align}\label{tvdec}
\begin{split}
\sum\limits_j \|w_j^{n+1}-w_{j-1}^{n+1}\|~&\leq~\sum\limits_j \left(\|w_j^{n+1}-w_{j-1}^{n}\|+\|w_{j-1}^{n}-w_{j-1}^{n+1}\|\right)\\&= \sum\limits_j \left(\|w_j^{n+1}-w_{j-1}^{n}\|+\|w_{j}^{n}-w_{j}^{n+1}\|\right)\\
&= \sum\limits_j \|w_j^{n} - w_{j-1}^{n}\|~.
\end{split}
\end{align}
\end{enumerate}
\end{proof}
Integrating (\ref{RP}) over the rectangle $(x_{j-1}, x_j] \times [t_n, t_{n+1})$ and using the divergence theorem, we get  
\begin{equation}\label{findiff0}
w_j^{n+1} = w_j^{n} + \frac{k}{h}\left[P(w_{j}^{n}) -P(w_{j-1}^{n})\right]~.
\end{equation}
\quad\\
\noindent{\bf Claim 3:} Given $0<T_1<T_2$, there exists a constant $C>0$ such that
\begin{equation}\label{TV}
\int_{-\infty}^{\infty}|w_h(x, T_2) - w_h(x, T_1)|~dx ~\leq~ C|T_2 - T_1| \cdot TV(w^0)
\end{equation}
where $TV(w^0)$ is the total variation of initial data $w^0$ measured in norm (\ref{norm}).
\begin{proof}
Let $0<T_1<T_2$ and $r, R \in \mathbb{Z}$ be such that
$$t_r\leq T_1<t_{r+1}\leq t_R\leq T_2<t_{R+1}~. $$
From this relation, using (\ref{whdef}) we have
\begin{equation}\label{tvclaim}
\int_{-\infty}^{\infty}|w_h(x, T_2) - w_h(x, T_1)|~dx = h\sum\limits_{j=-\infty}^{\infty} |w_j^{R+1} - w_j^{r+1}| \leq h\sum\limits_{j=-\infty}^{\infty} \sum\limits_{n=r+1}^{R} |w_j^{n+1} - w_j^{n}|~.
\end{equation}
Using (\ref{findiff0}) in (\ref{tvclaim}) we get
\begin{equation}\label{a40}
\int_{-\infty}^{\infty}|w_h(x, T_2) - w_h(x, T_1)|~dx \leq k \sum\limits_{j=-\infty}^{\infty} \sum\limits_{n=r+1}^R |{P}(w_{j}^{n}, w_{j+1}^{n}) - {P}(w_{j-1}^{n}, w_{j}^{n})|~.
\end{equation}
As $P$ is Lipschitz,
\begin{equation}\label{tvest}
\sum\limits_{j=-\infty}^{\infty} |{P}(w_{j}^{n}, w_{j+1}^{n}) - {P}(w_{j-1}^{n}, w_{j}^{n})| \leq C\cdot TV(w_h)
\end{equation}
where $C$ is the Lipschitz constant for $P$. (\ref{tvdec}) implies that $\{w_h\}$ has nonincreasing total variation, so we have
\begin{equation}\label{tvest}
\sum\limits_{j=-\infty}^{\infty} |{P}(w_{j}^{n}, w_{j+1}^{n}) - {P}(w_{j-1}^{n}, w_{j}^{n})| ~\leq~C\cdot TV(w^{0})~.
\end{equation}
As $k(R-r)\leq |T_2 - T_1|$, (\ref{a40}) and (\ref{tvest}) together yield (\ref{TV}).
\end{proof}
\quad\\
Claims 2 and 3 show that $w_h$ is continuous in time and has bounded total variation for every $h$. Moreover, $\{w_h\}$ is uniformly bounded on $[0, T]$ for $T<\infty$. By Helly's theorem (see Theorem 1.3 in \cite{MO}), there exists a subsequence of $\{w_h\}$ converging pointwise to a limit $w(x,t)$. In addition, as $\{w_h\}$ is uniformly bounded, this subsequence is said to converge boundedly a.e. to $w(x,t)$. Hence, by Lax and Wendroff's theorem in Section 1 of \cite{LW}, $w(x,t)$ is a weak solution to (\ref{RP}).\\

In order to establish that $w(x,t)$ is an admissible weak solution, let us take a convex extension $(\mathcal{E}, \mathcal{Q})$ for (\ref{RP}) and denote
$$\mathcal{E}_j^{n+1} = \mathcal{E}(w_j^{n+1})~~\text{and}~~ \bar{\mathcal{Q}}(w_{j-1}^{n+1}, w_j^{n+1})=\mathcal{Q}(\tilde{w}(x_{j-1},t_{n+2}))~.$$
Recalling (\ref{wjdef}) and using Jensen's inequality, we have
\begin{equation}\label{jensen}
\mathcal{E}_j^{n+1} =  \mathcal{E}\left(\frac{1}{h}\int_{x_{j-1}}^{x_j}\tilde{w}(x, t_{n+1}) dx\right)\leq \frac{1}{h}\int_{x_{j-1}}^{x_j} \mathcal{E}(\tilde{w}(x, t_{n+1})) dx~.
\end{equation}
Given a convex extension $(\mathcal{E}, \mathcal{Q})$, for a weak solution $w$ to (\ref{RP}) to be admissible, it must satisfy
\begin{equation}\label{hartlax}
\mathcal{E}(w)_t + \mathcal{Q}(w)_x \leq 0~.
\end{equation}
In other words, for any smooth nonnegative test function $\psi$ it must hold that
\begin{multline}\label{lax}
-\int_0^{\infty} \int_{-\infty}^{\infty}\left(\mathcal{E}(w(x,t))\psi_t(x,t) + \mathcal{Q}(w(x,t))\psi_x(x,t)\right)~dxdt \\ - \int_{-\infty}^{\infty}\mathcal{E}(w(x,0))\psi(x,0)~dx~\leq~0~.
\end{multline}
Since (\ref{hartlax}) holds for the solution to every local Riemann problem arising at each $x_j$, integrating (\ref{hartlax}) over $(x_{j-1}, x_j] \times [t_n, t_{n+1})$ we get
\begin{multline*}
\frac{1}{h}\int_{x_{j-1}}^{x_j} \mathcal{E}(\tilde{w}(x, t_{n+1})) dx ~\leq~ \frac{1}{h}\int_{x_{j-1}}^{x_j} \mathcal{E}(\tilde{w}(x, t_{n})) dx~ \\-~ \frac{1}{h}\int_{t_{n}}^{t_{n+1}}\left(\mathcal{Q}(\tilde{w}(x_{j},t)) - \mathcal{Q}(\tilde{w}(x_{j-1},t))\right) dt,
\end{multline*}
which implies
\begin{equation}\label{ent}
\mathcal{E}_j^{n+1} \leq \mathcal{E}_j^{n} - \frac{k}{h}\left(\bar{\mathcal{Q}}(w_{j}^{n}, w_{j+1}^{n}) - \bar{\mathcal{Q}}(w_{j-1}^{n}, w_{j}^{n})\right)~.
\end{equation}
Then by Theorem 1.1 in \cite{HLL}, a weak solution $w$ to (\ref{RP}) obtained using Godunov's method satisfies the entropy condition (\ref{lax}). Thus $w(x,t)$ is an admissible weak solution to (\ref{RP}). Moreover, since $w(x,t)$ has bounded total variation, by Theorem 9.4 in \cite{Bressan} it is the unique entropy admissible weak solution to the Cauchy problem (\ref{RP}).


\end{document}